\documentclass[11pt,reqno]{amsart}

\usepackage{floatrow}

\usepackage{amsmath}
\usepackage{amssymb}
\usepackage[left=1in,top=1in,right=1in,bottom=1in]{geometry}
\usepackage{epsfig}
\usepackage[normalem]{ulem}
\usepackage[usenames,dvipsnames]{color}
\usepackage{todonotes}

\usepackage[colorlinks, citecolor=black, linkcolor=black, urlcolor=black]{hyperref}
\usepackage{bm}
\usepackage{algorithm}
\usepackage{algpseudocode}

\usepackage{enumitem}

\allowdisplaybreaks

\usepackage{comment} 
\usepackage{hyperref}
\usepackage{cleveref}
\crefname{subsection}{subsection}{subsections}

\usepackage{tikz}
\usetikzlibrary{backgrounds}
\usetikzlibrary{intersections}

\newtheorem{theorem}{Theorem}[section]
\newtheorem{lemma}[theorem]{Lemma}

\newtheorem{claim}[theorem]{Claim}

\newcommand{\E}{\mathbb E}

\newcommand{\Var}{\mathbb V\textrm{ar}}
\newcommand{\Prob}{\mathbb{P}}
\newcommand{\Bin}{\mathrm{Bin}}

\newcommand{\Nn}{{\mathbb N}}

\newcommand{\scr}{\mathcal}
\newcommand{\mb}{\mathbb}

\newcommand{\whp}{{\it w.h.p.}}

\theoremstyle{definition}
\newtheorem{remark}{Remark}

\title[Triangle-Saturated Graphs in the Semi-Random Graph Process]{Triangle-Saturated Graphs \\ in \\ the Semi-Random Graph Process}

\author{Felix Christian Clemen}
\address{Department of Mathematics and Statistics, University of Victoria, Victoria, Canada.}
\email{fclemen@uvic.ca}

\author{Pawe\l{} Pra\l{}at}
\address{Department of Mathematics, Toronto Metropolitan University, Toronto, Canada}
\email{pralat@torontomu.ca}

\date{}

\begin{document}

\begin{abstract}
The semi-random graph process is an adaptive random graph process in which an online algorithm is initially given an empty graph on $n$ vertices. In each round, a vertex $u$ is presented to the algorithm independently and uniformly at random. The algorithm then adaptively selects a vertex $v$, and adds the edge $uv$ to the graph. We also consider the offline version of the process in which the algorithm is given the entire sequence of random vertex choices before the selection takes place. For a given graph property, the objective of the algorithm is to force the graph to satisfy this property asymptotically almost surely in as few rounds as possible. 

In this paper, we focus on the property of being triangle-saturated and establish upper and lower bounds on the number of rounds required to construct a triangle-saturated graph in both the online and offline versions of the process.
\end{abstract}

\maketitle

\section{Introduction and Main Results}

The \textit{semi-random graph process} was suggested by Peleg Michaeli, formally introduced in 2020~\cite{ben2020semi}, and studied recently~\cite{behague2022,behague2026creating,ben-eliezer_fast_2020,burova2025semi,gao2022perfect,koerts2022k,macrury2022sharp,molloy2025perfect}, especially in the context of Hamiltonian cycles~\cite{frieze2025building,frieze2022hamilton,gao2020hamilton,gao2022fully,molloy2025perfect}. It is an example of an \textit{adaptive} random graph process, in that an \textit{algorithm} has partial control over which random edges are added in each step. Specifically, the algorithm begins with the empty multigraph $H_0$ on vertex set $[n] = \{1, \ldots, n\}$, and in each \textit{step} (or round) $t \in \mb{N}$, a vertex $u_t$ is chosen independently and uniformly at random (u.a.r.) from $[n]$. The algorithm is \textit{online}, in that it is given the graph $H_{t-1}$ and vertex $u_t$ and must select a vertex $v_t \in [n]$ and add the edge $u_tv_t$ to $H_{t-1}$ to form $H_{t}$. Thus, it decides on $v_t$ without full knowledge of which vertices will be randomly drawn in the future. 

In this paper, the goal of the online algorithm is to make the underlying simple graph $G_t$ satisfy a given graph property $\scr{P}$ as quickly as possible. Here, $G_t$ is obtained from $H_t$ by deleting loops and replacing each collection of parallel edges by a single edge. If the online algorithm chooses $v_t$ u.a.r.\ from $[n]\setminus\{u_t\}$ in every round, then each edge of $K_n$ is selected independently and uniformly at random. Thus, this strategy produces the Erdős--Rényi random graph process with replacement. The main focus in the literature is to understand how intelligent decision-making can accelerate the appearance of graph properties $\scr{P}$.

There are some interesting variants of the semi-random process, including the \emph{offline} version that was introduced in~\cite{ben2020semi} together with the original model. In this variant of the process, the algorithm is given the entire sequence of random vertex choices before the selection takes place. Clearly, this variant is more powerful and typically one can achieve property $\scr{P}$ faster compared to the original model. However, there are properties for which this additional power might not accelerate the process. For example, both online and offline models need $n-1$ rounds to create a connected graph. 

\medskip

While the semi-random graph process has been studied extensively in recent years, discrete processes in which an algorithm has partial control over its random steps have been studied previously. One of the first such examples is the work of Azar et al.~\cite{azar_balanced} in the context of the balls-into-bins problem. By allowing an algorithm a small amount of \textit{adaptivity}, Azar et al.~proved that the maximum load on any bin can be reduced by an exponential factor in comparison to a purely random (and non-adaptive) strategy. This phenomenon has since been described as the ``power of two choices''. 

The \textit{Achlioptas process} is another example of an adaptive random process; it was proposed by Dimitris Achlioptas and first formally studied in~\cite{bohman2001avoiding}. The Achlioptas process, too, begins with the empty graph $G_0$ on vertex set $[n]$. In each round $t \in \mb{N}$, the online algorithm is presented two distinct edges $e^{1}_t, e^{2}_t$ drawn u.a.r.\ from the edges on vertex set $[n]$ that were \textit{not} previously chosen (i.e., not in $G_{t-1}$).  The online algorithm then chooses precisely one of $e^{1}_t, e^{2}_t$, and then adds it to $G_{t-1}$, yielding the graph $G_{t}$. In contrast to the semi-random process, the objective first considered in~\cite{bohman2001avoiding} is to \textit{delay} the construction of a graph satisfying a property $\scr{P}$ for as many rounds as possible. Achlioptas asked what can be done if $\scr{P}$ corresponds to the existence of a giant component (i.e., a connected component of size $\Omega(n)$). Bohman and Frieze analyzed a greedy strategy which does \textit{not} build a giant component for $0.535 n$ rounds, which is strictly larger than the threshold at which a giant component appears in the Erdős–Rényi random graph process~\cite{erdHos1960evolution}. Improvements have since been made on determining the optimal algorithm for delaying the appearance of a giant component. The best known lower bound of $0.829n$ is due to Spencer and Wormald~\cite{spencer_birth}, and the best known upper bound of $0.944n$ is due to Cobârzan~\cite{cob_2014}.  Numerous works have studied other properties and extensions of the Achlioptas process. Most related to our work is~\cite{krivelevich2010hamiltonicity}, whose goal is to force $G_t$ to be Hamiltonian as \textit{quickly} as possible. We refer the reader to the introduction of~\cite{Fraiman2022OnTP} for an in-depth overview of the literature on the Achlioptas process.

\subsection{Main Result: Creating Triangle-saturated Graphs} 

In this paper, we focus on the property of being triangle-saturated. A \emph{triangle-saturated} (or $K_3$-saturated) graph is a graph that is triangle-free but adding any missing edge creates at least one new triangle. This class of graphs is also widely known as maximally triangle-free. 

Classical extremal results for triangle-saturated graphs provide useful benchmarks for determining how quickly such graphs can be constructed in the semi-random process. The maximum number of edges in a triangle-saturated graph on $n$ vertices is $\lfloor n^2/4\rfloor$, and the unique extremal graph is the complete bipartite graph $K_{\lfloor n/2\rfloor,\lceil n/2\rceil}$ by Mantel's Theorem~\cite{mantel1907vraagstuk}. On the other hand, the minimum number of edges in a triangle-saturated graph on $n$ vertices is $n-1$, which immediately gives a lower bound of $n-1$ rounds for the semi-random process. The unique graph attaining this minimum is the star $K_{1,n-1}$ by the Erdős--Hajnal--Moon Theorem~\cite{erdos1964problem}.

\medskip

To create a triangle-saturated graph in the semi-random process, it is tempting to simulate the famous triangle-free process that was introduced by Bollob\'as and Erd\H{o}s (see~\cite{bollobas2009random}). The process starts with an empty graph on $n$ vertices and adds edges one-by-one, chosen uniformly at random, provided that the newly added edge does not form a triangle. The process terminates when no more edges can be added (resulting in a random triangle-saturated graph). Bohman and Keevash proved that \whp\footnote{A property is said to hold \emph{with high probability} (\whp) if it holds with probability tending to one as $n \to \infty$}\ the triangle-free process terminates after $(1/(2\sqrt{2})+o(1)) (\log n)^{1/2} n^{3/2}$ rounds~\cite{bohman2021dynamic}. 

To simulate the triangle-free process, in each round the online semi-random process may choose a vertex $r$ u.a.r.\ from $[n]$ and let $v_t=r$ if $r\neq u_t$, $u_tr\notin E(G_{t-1})$, and adding $u_tr$ does not create a triangle; otherwise, let $v_t=u_t$ (that is, create a loop). Clearly, this coupling ensures that after $t$ rounds, the graph obtained by the semi-random process after deleting the loops is exactly the graph obtained by the triangle-free process after $t-s$ rounds, where $s$ denotes the number of loops created during the first $t$ rounds. Initially, very few loops are created but later on the semi-random process will be mostly creating loops. Hence, this approach will not give any useful upper bound. However, not surprisingly, a fully adaptive algorithm can do much better than that anyway. 

\medskip

Let $w_{\mathrm{on}}(n,H)$ denote the smallest integer $m$ for which there exists an online strategy that constructs an $H$-saturated graph after $m$ rounds with probability at least $1/2$. Define $w_{\mathrm{off}}(n,H)$ analogously, with the strategy allowed to know the entire sequence of presented vertices in advance.

In this paper, we prove the following results that are asymptotic by nature. For example, to prove an upper bound for $w_{\mathrm{on}}(n,H)$ or $w_{\mathrm{off}}(n,H)$, we fix a strategy and a function $u = u(n)$ and show that \whp\ this specific strategy creates an $H$-saturated graph after $u$ rounds. Similarly, to prove a lower bound, we fix a function $\ell = \ell(n)$ and show that \whp\ no strategy creates an $H$-saturated graph after $\ell$ rounds. As a result, the bounds claimed below hold assuming that $n$ is large enough. 

For the offline variant, we prove the following bounds. 

\begin{theorem}
\label{thm:off_upper}
\begin{eqnarray*}
w_{\mathrm{off}}(n,K_3)
&\le&
n \log n - n \log\log n + n \\
&=& n \log n - (1+o(1)) n \log\log n.
\end{eqnarray*}
\end{theorem}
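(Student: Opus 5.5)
The plan is to build the star $K_{1,n-1}$, which is triangle-saturated: it is triangle-free, and adding any edge between two leaves closes a triangle with the centre. The offline knowledge is used only to decide, at the rounds where the centre is presented, which never-presented vertices to attach to it. Fix the centre $c=1$ and set $T = n\log n - n\log\log n + n$.

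\emph{Strategy.} Given the whole sequence $u_1,\dots,u_T$, let $U$ be the set of vertices other than $c$ that never appear in the sequence. In every round with $u_t\neq c$, take $v_t=c$. A round where $u_t$ appears for the first time creates the edge $u_tc$; a later round only creates a parallel edge, which disappears in $G_T$. In the rounds with $u_t=c$, take $v_t$ to be a not-yet-used vertex of $U$, or create a loop once $U$ is exhausted. Every vertex outside $U\cup\{c\}$ appears at least once, so it becomes a leaf. Hence $G_T=K_{1,n-1}$ provided the number $X_c$ of rounds with $u_t=c$ satisfies $X_c\ge |U|$. This also covers the case $U=\emptyset$, and $X_c\ge|U|$ forces $X_c\ge1$ whenever $U\neq\emptyset$. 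No other edges are ever created, so the final graph is exactly the star.

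\emph{Probabilistic estimates.}
\begin{itemize}
\item Since $X_c\sim \Bin(T,1/n)$ has mean $T/n=\log n-\log\log n+1=(1+o(1))\log n\to\infty$, Chernoff's bound gives $X_c\ge (1-\varepsilon)\log n$ \whp\ for any fixed $\varepsilon>0$.
\item For $|U|$, a vertex is missed with probability $(1-1/n)^T\le e^{-T/n}=\frac{\log n}{e\,n}$. Therefore
\[
\E|U|\le (n-1)\frac{\log n}{e\,n}\le \frac{\log n}{e}.
\]
\item The indicators of distinct vertices being missed are negatively correlated, so $\Var|U|\le \E|U|$. Chebyshev's inequality then gives $|U|\le (1/e+\varepsilon)\log n$ \whp.
\end{itemize}
Since $1/e<1$, choosing $\varepsilon$ small yields $X_c\ge |U|$ \whp. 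Hence the strategy succeeds with probability at least $1/2$ for large $n$, which gives the stated bound on $w_{\mathrm{off}}(n,K_3)$.

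The main point to get right is the choice of $T$. It has to leave the number of unseen vertices, about $n e^{-T/n}$, at order $\log n$ with constant below $1$ (here $1/e$). At the same time the expected count $T/n$ of the centre's appearances must stay at $(1+o(1))\log n$. Both concentration statements are routine once the constants are separated, which is exactly what the additive $+n$ in $T$ ensures. The comparison between the two counts is therefore where the constant $1/e$ versus $1$ matters.
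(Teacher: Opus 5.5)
Your proposal is correct and follows the paper's proof. You build the star at vertex $1$ and send every other presented vertex to the centre, and the centre's $(1+o(1))\log n$ appearances cover the roughly $e^{-1}\log n$ never-presented vertices; the only cosmetic difference is that you bound the number of unseen vertices yourself via negative correlation and Chebyshev, whereas the paper invokes Lemma~\ref{lem:coupon_collector}(a).
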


\begin{theorem}
\label{thm:off_lower}
\begin{eqnarray*}
w_{\mathrm{off}}(n,K_3)
&\ge& 
n \log n - 2 n \log\log n - n \log 7 \\
&=& n \log n - (2+o(1)) n \log\log n.
\end{eqnarray*}
\end{theorem}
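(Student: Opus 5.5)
The plan is a first-moment/concentration argument on the random sequence $u_1,\dots,u_\ell$, followed by a deterministic structural argument showing that a triangle-saturated graph cannot be built from such a sequence. Set $\ell = n\log n - 2n\log\log n - n\log 7$ and $c=\ell/n$. I would stress at the outset that the structural step below is not yet worked out and is where the argument may fail.

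\textbf{Step 1 (unpresented vertices).} Let $U$ be the set of vertices never presented in the first $\ell$ rounds. For a fixed vertex,
\[
\Prob(v\in U)=(1-1/n)^{\ell}=e^{-c+O(\log n/n)}=(1+o(1))\,\frac{7(\log n)^2}{n}.
\]
Hence $\E|U|=(1+o(1))\,7(\log n)^2$. The indicators are negatively correlated, so Chebyshev's inequality gives $|U|\ge 6(\log n)^2$ \whp.

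\textbf{Step 2 (structure of $U$).} Every edge of $H_\ell$ has a presented endpoint, so $U$ is independent in $G_\ell$. If $G_\ell$ is triangle-saturated, every two $x,y\in U$ must have a common neighbour $w$. Both edges $wx$ and $wy$ were created in rounds where $w$ was the presented vertex. Thus each pair in $U$ is covered by the neighbourhood $N_U(w)=N(w)\cap U$ of some vertex $w$, and covering $w$ by $k$ vertices of $U$ costs at least $k$ presentations of $w$. Since $N(w)$ is independent, saturation also forces every vertex outside $N(w)\cup\{w\}$ to have a neighbour in $N(w)$.

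\textbf{Step 3 (where the specific constants come from).} Concentration alone controls only unpresented vertices, whose typical count is $n e^{-c}$. Vertices presented exactly $j$ times number about $n e^{-c}c^j/j!$. My reading is that the second $\log\log n$ term signals a counting argument at the level of vertices presented at most once, whose number is about $n e^{-c}(1+c)\approx 7(\log n)^3$. I would show \whp\ that, for these low-presentation vertices, the rounds they receive as the presented partner of high-degree hubs are too few to satisfy all pairwise common-neighbour requirements together with triangle-freeness. The intended mechanism is a union bound over the $\binom{n}{O(1)}$ choices of hubs, combined with the fact that each hub $w$ with $|N_U(w)|\ge 2$ forces all other vertices of $U$ to attach to vertices of $N(w)$.

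\textbf{Main obstacle.} Step 2 does not by itself give a contradiction: a single hub adjacent to all of $U$, using $|U|=O(\log^2 n)$ rounds, is consistent with it. So the proof hinges on turning Step 3 into an honest obstruction. The idea is to exploit that the presented counts of vertices in a hub's neighbourhood are themselves random, so that some low-presentation vertex outside $U$ fails to reach every such neighbourhood. This is the step that determines the constant $7$, and I do not currently see how to make it rigorous.
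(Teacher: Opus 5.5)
There is a genuine gap. Your framework is right: establish \whp\ properties of the sequence $u_1,\dots,u_\ell$, then give a deterministic argument that holds for every graph compatible with that sequence. But no such deterministic argument is actually supplied, and Step~3 points in the wrong direction. Pairs inside $U$ are the wrong target, since, as you note, a single hub closes all of them cheaply. Neither vertices presented at most once nor a union bound over hubs is needed.

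Two things are missing. First, you need a second property of the sequence: \whp\ every vertex $w$ is presented $X_w\le 3\log n$ times, by Chernoff plus a union bound (Lemma~\ref{lem:coupon_collector}(c)). Second, you should count the \emph{cross} pairs $ab$ with $a\notin U$ and $b\in U$.

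If such a pair is closed, it has a common neighbour $w$. This $w$ lies outside $U$ because $U$ is independent. Every edge from $w$ into $U$ was created in a round where $w$ was presented, so $|N(w)\cap U|\le X_w\le 3\log n$. Hence the number of closed cross pairs is at most
\[
\sum_{w\notin U}|N(w)\setminus U|\cdot|N(w)\cap U|\;\le\;3\log n\sum_{w}\deg(w)\;\le\;3\log n\cdot 2\ell=(6+o(1))\,n\log^2 n .
\]
At most $\ell=O(n\log n)$ cross pairs are edges. Yet there are $|U|(n-|U|)=(7+o(1))\,n\log^2 n$ cross pairs in total, so one of them is open.

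Your own observation at the end of Step~2 already gives this. Apply it to $b\in U$ instead of to a hub $w$: every vertex outside $N(b)\cup\{b\}$ needs a neighbour in $N(b)$, so $\sum_{w\in N(b)}\deg(w)\ge n-1-\deg(b)$. Summing over $b\in U$ gives $|U|(n-1)-\ell\le\sum_w|N(w)\cap U|\deg(w)\le 6\ell\log n$, the same contradiction.

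This also shows that your reading of the constants is off. The term $2n\log\log n$ is there to make $|U|=ne^{-\ell/n}$ of order $\log^2 n$. That is what is needed to beat (maximum presentation count)$\times$(average degree)$\approx 3\log n\cdot 2\log n$. The constant $7$ is just a convenient value exceeding $2\cdot 3=6$; it does not come from vertices presented at most once.

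Consequently, your Step~1 conclusion $|U|\ge 6\log^2 n$ is slightly too weak. You need $|U|\ge(6+\varepsilon)\log^2 n$. Your Chebyshev argument does give this, in the form $|U|=(7+o(1))\log^2 n$.
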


For the original (online) process, we prove the following bounds. 

\begin{theorem}
\label{thm:on_upper}
Let $\omega=\omega(n)$ be any function tending to infinity as $n \to \infty$. Then,
$$
w_{\mathrm{on}}(n,K_3) 
\le 
n \log n + n \omega.
$$
\end{theorem}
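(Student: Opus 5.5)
We build a complete bipartite graph with a star-like skeleton, using a coupon-collector argument. The key observation is that a complete bipartite graph $K_{A,B}$ with $A,B\neq\emptyset$ and $A\cup B=[n]$ is triangle-saturated. It is triangle-free. Any missing edge lies inside $A$ or inside $B$; an edge $xy$ inside $A$ closes a triangle with any $b\in B$, and symmetrically for $B$. The target is therefore any complete bipartite spanning graph. The cheapest such graph is a star, $|A|=1$, but forcing a star requires every vertex except the center to be hit, which is coupon collecting and costs $n\log n+O(n)$ rounds. An online strategy cannot pick the center in advance: the center's own edges come for free only when other vertices are presented.

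The strategy is as follows. Fix a center vertex, say $1$. In round $t$, if $u_t\neq 1$ and $u_t$ is not yet adjacent to $1$, choose $v_t=1$ and add the edge $u_t1$. Otherwise, make the move harmless: create a loop $v_t=u_t$, or repeat an existing edge $u_t1$ as a parallel edge. Loops and parallel edges disappear in the simple graph $G_t$. At every moment $G_t$ is a star centered at $1$ on some leaf set, so it is always triangle-free. Once every vertex of $[n]\setminus\{1\}$ has appeared at least once as some $u_t$, $G_t=K_{1,n-1}$, which is triangle-saturated by the observation above. Note that vertex $1$ itself never needs to be presented.

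It remains to bound the number of rounds until all vertices in $[n]\setminus\{1\}$ have been presented. This is coupon collecting with $n-1$ coupons drawn from $n$ equally likely values. For a fixed vertex $x\neq 1$, the probability that $x$ is not presented in $m=n\log n+n\omega$ rounds is $(1-1/n)^m\le e^{-m/n}=e^{-\omega}/n$. A union bound over the $n-1$ vertices shows that the probability some vertex is missed is at most $e^{-\omega}\to 0$. Hence \whp\ $G_m=K_{1,n-1}$ is triangle-saturated, so certainly with probability at least $1/2$ for $n$ large, giving $w_{\mathrm{on}}(n,K_3)\le n\log n+n\omega$.

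The only point needing care is that the strategy must stay triangle-free throughout and end exactly saturated. This is immediate here because every edge the strategy adds is incident to vertex $1$, so $G_t$ is a subgraph of a star at all times. There is no substantial obstacle; the whole argument is the union-bound estimate for coupon collecting.
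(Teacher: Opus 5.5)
Your proof is correct and is essentially the paper's argument: fix a centre in advance, always join the presented vertex to it (a loop when the centre itself is presented), and use the union bound $(n-1)(1-1/n)^{m}\le e^{-\omega}\to 0$ (the content of Lemma~\ref{lem:coupon_collector}(b)) to conclude that every other vertex is presented w.h.p., so $G_m$ is the triangle-saturated star. Your aside that an online strategy ``cannot pick the center in advance'' contradicts your own strategy, which does exactly that, but it plays no role in the argument.
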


\begin{theorem}
\label{thm:on_lower}
\begin{eqnarray*}
w_{\mathrm{on}}(n,K_3) 
&\ge& 
n \log n - n \log \log n - 3 n \log \log \log n \\ 
&=& n \log n - (1+o(1)) n \log\log n.
\end{eqnarray*}
\end{theorem}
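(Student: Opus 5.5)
The plan is a counting argument on the set of vertices that are never presented. Fix
$$t = n\log n - n\log\log n - 3n\log\log\log n,$$
and let $U\subseteq[n]$ be the set of vertices never presented as some $u_s$ with $s\le t$. Each vertex is missed with probability $(1-1/n)^t$, so $\E|U| = n(1-1/n)^t = (1+o(1))\log n(\log\log n)^3$. A second moment computation should give $|U| = (1+o(1))\log n\,(\log\log n)^3$ \whp. Suppose $G_t$ is triangle-saturated. I will derive structural constraints on how $U$ is attached to the rest of $G_t$. I will then show that an online algorithm cannot invest enough edges into vertices that turn out to lie in $U$.

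\emph{Step 1: structure.} Every edge has the form $u_sv_s$, so a vertex $x\in U$ only receives edges in rounds where the algorithm picks $v_s=x$. Hence $U$ is independent and $\deg(x)$ equals the number of such rounds. Saturation gives the following for every $x\in U$: $N(x)$ is independent, and every $w\notin N[x]$ has a neighbour in $N(x)$. Therefore
$$\sum_{z\in N(x)}\deg(z)\ \ge\ n-1-\deg(x).$$

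\emph{Step 2: degree bound outside hubs.} The degree of a vertex $z$ is the number of times it is presented, which is $O(\log n)$ for every $z$ \whp, plus the number of times it is chosen. Call $z$ a \emph{hub} if it is chosen at least $K=\log n(\log\log n)$ times, say. Since the total number of choices is $t$, there are at most $t/K = O(n/\log\log n)$ hubs. If $N(x)$ contains no hub, Step 1 forces $\deg(x)\ge n/(K+O(\log n))$. The case where $N(x)$ contains a hub $z$ needs separate treatment. Then $z$ is adjacent to $x$, so $z$ itself must be non-adjacent to many vertices, or else $G$ is nearly a star centred at $z$. If $G$ is nearly a star, every leaf that is not in $U$ must have been joined to $z$, which forces essentially all vertices to be hit; a star centred at $z$ is the extreme case, needing $z$ presented about $|U|$ times. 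I would quantify this to show that, in either case, $\sum_{x\in U}\deg(x)\ge c\,|U|\,n/(\log n\cdot\log\log n)$, with a polyloglog loss absorbed by the $(\log\log\log n)^3$ slack.

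\emph{Step 3: online upper bound on edges into $U$.} At round $s$ the algorithm chooses $v_s$ knowing only the past. Whatever it picks, the probability that $v_s$ is never presented in rounds $s+1,\dots,t$ is $(1-1/n)^{t-s}$. So the expected number of rounds $s$ with $v_s\in U$ is at most
$$\sum_{s\le t}(1-1/n)^{t-s}\le n,$$
and since these indicators form a supermartingale-type sum, Azuma or Freedman gives at most $2n$ such rounds \whp. This is exactly where online-ness is used. An offline algorithm knows $U$ and can direct many more choices at it, consistent with Theorem~\ref{thm:off_upper}. Combining with Step 2, $2n\ge c\,|U|\,n/(\log n\,\log\log n)$. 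This contradicts $|U|\approx\log n(\log\log n)^3$, so no online strategy succeeds by round $t$ \whp.

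\emph{Main obstacle.} The delicate part is Step 2, namely handling hubs. High-degree vertices in $N(x)$ can dominate the graph cheaply. One must show that using them either forces a near-star structure, which is itself too costly in this time window, or is paid for elsewhere. I would first try to prove that two vertices of $U$ with a common hub neighbour force most of the graph into the hub's neighbourhood, and then reduce to the star analysis.
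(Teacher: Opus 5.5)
Your outline is fine except for Step 2 when $N(x)$ contains a hub. That case is the entire difficulty, and it is not resolved; the route you sketch does not lead anywhere.

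Your dichotomy (either $z$ misses many vertices or $G$ is nearly a star at $z$) gives no lower bound on $\deg(x)$. $N(x)$ may consist of a few hubs, none close to spanning, whose neighbourhoods jointly dominate $G-x$, and then $\deg(x)$ is tiny. Nor is a common hub neighbour of two vertices of $U$ a special event. $U$ is independent, so saturation forces \emph{every} pair in $U$ to have a common neighbour. Each vertex has at most $3\log n$ neighbours in $U$, since it is presented at most that often and edges into $U$ arise only when the other endpoint is presented. So such common neighbours are unavoidable and imply nothing like a near-star.

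The constraints you actually have, $\deg(z,U)\le 3\log n$ and $\sum_z\deg(z)\le 2t$, bound the domination budget $\sum_{x\in U}\sum_{z\in N(x)}\deg(z)=\sum_z\deg(z)\deg(z,U)$ only by about $6n\log^2 n$. That comfortably exceeds the roughly $|U|\,n=n\log n(\log\log n)^3$ needed for every $N(x)$ to dominate. This is exactly why the purely structural (offline) count only bites once $|U|\gtrsim\log^2 n$. So these counts cannot give Step 2, and Step 2 never uses the online hypothesis. Worse, the quantity you control online in Step 3, $\sum_{x\in U}\deg(x)$, does not register the cost of hub-dominated neighbourhoods at all, since they need only a few edges into $U$.

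The missing idea, which is the paper's route, is to count \emph{closed pairs} $ab$ with $a\in U$ rather than degrees of $U$-vertices. Online-ness is then used to control hubs only at the moments they matter. When $u_sv_s$ is added, the number of newly closed pairs meeting $U$ is at most $\mathbf{1}_{\{v_s\in U\}}d_{s-1}(u_s)+d_{s-1}(u_s,U)+d_{s-1}(v_s,U)$. The last two terms are at most $3\log n$ each. A hub therefore costs a lot only in rounds where it is itself presented. Since $u_s$ is uniform and independent of $G_{s-1}$, we have $\mathbb{E}[d_{s-1}(u_s)\mid G_{s-1}]=2e(G_{s-1})/n\le 2\log n$. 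Over the last $O(n\log\log n)$ rounds this gives $O(n\log n(\log\log n)^2)=o(n|U|)$ newly closed pairs w.h.p.

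For the earlier rounds, up to $T_1=n(\log n-3\log\log n)$, use the offline count. It gives only $O(n\log^2 n)$ closed pairs at the set $I$ of vertices unpresented by $T_1$, where $|I|\approx\log^3 n$. Combine this with the fact that, given the history up to $T_1$, the final unpresented set is a uniformly random subset of $I$. Hence all but $o(|U|)$ vertices of $U$ carry only $o(n)$ early closed pairs, and some pair at $U$ stays open.

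A minor point on Step 3: the indicators $\mathbf{1}_{\{v_s\in U\}}$ are not determined at time $s$, so Azuma/Freedman do not apply as stated. Markov's inequality suffices there given your slack.
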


Choosing any function $\omega\to\infty$ such that $\omega=o(\log\log n)$ in Theorem~\ref{thm:on_upper}, we see that in both the online and offline settings the difference between our upper and lower bounds is $(1+o(1))n\log\log n$. The proofs of the upper bounds reveal a natural bottleneck, suggesting that the upper bounds are closer to the truth, although we were unable to close the remaining gap.

Let us also mention that one should be able to generalize the arguments and extend our results to $K_k$-saturated graphs for some fixed integer $k \ge 3$. For simplicity, we concentrate on triangle-saturated graphs. 

Proofs of Theorems~\ref{thm:off_upper} and~\ref{thm:off_lower} can be found in Section~\ref{sec:offline} whereas 
proofs of Theorems~\ref{thm:on_upper} and~\ref{thm:on_lower} are provided in Section~\ref{sec:online}. 

\subsection{Further Related Work} 

The seminal paper~\cite{ben2020semi} showed that the semi-random graph process is general enough to simulate several well-studied random graph models by using appropriate strategies. In the same paper, the process was studied for various natural properties such as having minimum degree $k \in \Nn$ or having a fixed graph $H$ as a subgraph. In particular, it was shown that \whp\ one can construct $H$ in less than $n^{(d-1)/d} \omega$ rounds where $d \ge 2$ is the degeneracy of $H$ and $\omega = \omega(n)$ is any function that tends to infinity as $n \to \infty$. This property was recently revisited in~\cite{behague2022}, where a conjecture from~\cite{ben2020semi} was proven for any graph $H$: \whp\ it takes at least $n^{(d-1)/d} / \omega$ rounds to create $H$. The property of having cliques of order tending to infinity as $n\to \infty$ was investigated in~\cite{gamarnik2023cliques}. In \cite{koerts2022k}, $k$-factors and $k$-connectivity were studied. 

\smallskip

Another property studied in the context of semi-random processes is that of having a perfect matching. Since the $2$-out process has a perfect matching \whp~\cite{WALKUP1980}, and the semi-random process can simulate the 2-out process, we immediately get an upper bound of $(2+o(1))n$. By simulating the semi-random process with another random graph process known to have a perfect matching \whp~\cite{pittel}, the bound can be improved to $(1+2/e+o(1))n < 1.73576n$~\cite{ben2020semi}. This bound was improved by investigating another fully adaptive algorithm~\cite{gao2022perfect}, giving the current best bound of $1.20524n$. The same paper improves the lower bound observed in~\cite{ben2020semi} of $(\ln(2)+o(1))n > 0.69314n$ to $0.93261n$. 

\smallskip

Next, we discuss results for the property of having a Hamiltonian cycle. It was shown in~\cite{bohman2009hamilton} that the $3$-out process generates a random graph that is Hamiltonian \whp\ Since the semi-random process can simulate the 3-out process, we get an upper bound of $(3+o(1))n$. A new upper bound was obtained in~\cite{gao2020hamilton} in terms of an optimal solution to an optimization problem whose value is believed to be at most $2.61135n$ by numerical support. The upper bound $(3+o(1))n$ obtained by simulating the $3$-out process is \textit{non-adaptive}. That is, the strategy does not depend on the history of the semi-random process. The improvement in~\cite{gao2020hamilton} is adaptive but in a weak sense. The strategy consists of four phases, each lasting a linear number of rounds, and the strategy is adjusted only at the end of each phase: for example, the algorithm might identify vertices of low degree, and then focus on them during the next phase. In~\cite{gao2022fully}, a fully adaptive strategy was proposed: at every step $t$, it pays attention to $G_{t-1}$ and $u_t$. As expected, such a strategy creates a Hamiltonian cycle substantially faster, and it improves the upper bound from $2.61135n$ to $2.01678n$. A further improvement in~\cite{frieze2022hamilton} brings the upper bound down to $1.84887n$. All ideas are combined together in~\cite{frieze2025building} to reduce it further, to $1.81701n$.

For the lower bound, it was observed in~\cite{ben2020semi} that because any Hamiltonian graph has minimum degree at least $2$, the lower bound is at least $\bigl(\ln 2+\ln(1+\ln 2)+o(1)\bigr)n \ge 1.21973n$. This was subsequently improved by $10^{-8}n$ in~\cite{gao2020hamilton}. By identifying structures created by the semi-random process whose edges cannot all lie on a Hamiltonian cycle, the bound was further raised to $1.26575n$ in~\cite{frieze2025building,gao2022fully}.

\smallskip

Other adaptive random graph processes and variants of the semi-random graph process have been considered in the literature. The \textit{semi-random tree process} is introduced in~\cite{burova2025semi}, where in each round, a random spanning tree of $K_n$ is presented to the algorithm, who chooses one of the edges to keep.  In~\cite{Harjas}, $k$ random vertices rather than just one are offered, and the algorithm chooses one of them before creating an edge. In~\cite{macrury2022sharp}, a general definition of an \textit{adaptive random graph process} is proposed. By parameterizing it appropriately, one recovers the Achlioptas process, the semi-random graph process, as well as the models of~\cite{burova2025semi} and~\cite{Harjas}. In~\cite{gilboa2021semi}, the vertices offered by the process follow a random permutation. Finally, hypergraphs are investigated in~\cite{behague2022,behague2026creating,molloy2025perfect}.

\section{Notation and Auxiliary Observations}\label{sec:auxiliary}

Throughout the paper, all logarithms are natural. Whenever a non-integer expression is used to specify a number of rounds, we take its floor.

\subsection{Notation}

An unordered pair $ab\notin E(G)$ in a triangle-free graph $G$ is called \emph{open} if adding $ab$ to $G$ does not create a triangle. Note that a triangle-free graph is triangle-saturated if and only if it does not contain an open pair. An unordered pair $ab\notin E(G)$ in a triangle-free graph $G$ is called \emph{closed} if adding $ab$ to $G$ creates a triangle.

For a graph $G$ and a vertex $x\in V(G)$, we write $N_G(x):=\{y\in V(G):xy\in E(G)\}$ for the \emph{neighbourhood} of $x$ in $G$, and $\deg_G(x):=|N_G(x)|$ for its \emph{degree}.

\subsection{Coupon Collector Estimates}

The number of vertices presented by the semi-random process is well-understood as it is related to the famous coupon collector problem. We record a few properties that we will use in our proofs.

\begin{lemma}\label{lem:coupon_collector}
Fix any function $T=T(n)$.
For any vertex $i \in [n]$, let $X_i$ be the random variable counting how many times vertex $i$ is presented by the semi-random process during the first $T$ steps.
Let $Z$ be the random variable counting how many vertices were not presented by the semi-random process during the first $T$ steps. 
Then, the following properties hold.
\begin{itemize}
\item [(a)] Suppose that $T/n - \log n \to - \infty$ as $n \to \infty$. Then, \whp
$$
Z = (1+o(1)) n \exp \left( - \frac{T}{n} \right) \to \infty.
$$ 
\item [(b)] Suppose that $T/n - \log n \to \infty$ as $n \to \infty$. Then, \whp\ $Z = 0$.
\item [(c)] Suppose that $T = (1+o(1)) n \log n$. Then, \whp\ no vertex is presented more than $3 \log n$ times, that is,
$$
\max_{i \in [n]} X_i \le 3 \log n.
$$
\end{itemize}
\end{lemma}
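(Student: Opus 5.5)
We handle the three parts separately. Throughout we use that each $X_i \sim \Bin(T,1/n)$ and that $Z=\sum_{i\in[n]} \mathbb{1}[X_i=0]$.

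\emph{Part (a)} is a first and second moment computation for $Z$. We have
$$\E Z = n(1-1/n)^T.$$
First suppose $T = O(n\log n)$; this is the only regime used later. Then $T/n^2 \to 0$, so
$$(1-1/n)^T = \exp\bigl(-T/n + O(T/n^2)\bigr) = (1+o(1))e^{-T/n}.$$
Hence $\E Z = (1+o(1)) n e^{-T/n}$, and this tends to infinity because $\log n - T/n \to \infty$. If $T$ is larger, the hypothesis $T/n - \log n \to -\infty$ already forces $T < n\log n$, so this case does not arise.

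For concentration, we compute
$$\E[Z(Z-1)] = n(n-1)(1-2/n)^T.$$
Since $(1-2/n) \le (1-1/n)^2$, this is at most $(\E Z)^2$. Therefore
$$\Var Z \le \E Z + \E[Z(Z-1)] - (\E Z)^2 \le \E Z = o\bigl((\E Z)^2\bigr).$$
Chebyshev's inequality then gives $Z = (1+o(1))\E Z$ \whp

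\emph{Part (b)} follows from Markov's inequality alone. We have
$$\Prob(Z>0) \le \E Z = n(1-1/n)^T \le n e^{-T/n} = \exp\bigl(-(T/n - \log n)\bigr) \to 0.$$

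\emph{Part (c)} uses a Chernoff bound together with a union bound. Here $\mu = \E X_i = T/n = (1+o(1))\log n$. We apply the standard tail bound $\Prob(X \ge k) \le (e\mu/k)^k$ with $k = 3\log n$. This gives
$$\Prob(X_i > 3\log n) \le \bigl((1+o(1))e/3\bigr)^{3\log n} = n^{-3\log(3/e) + o(1)}.$$
The exponent $3\log(3/e) = 3(\log 3 - 1) \approx 0.296$ is less than $1$, so this bound alone is too weak to survive a union bound over all $n$ vertices.

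The tighter Chernoff form $\Prob(X \ge (1+\delta)\mu) \le \exp\bigl(-\mu\,[(1+\delta)\log(1+\delta) - \delta]\bigr)$ resolves this. Take $1+\delta = 3 + o(1)$. Then the bracket equals $3\log 3 - 2 \approx 1.296$, so
$$\Prob(X_i > 3\log n) \le n^{-1.29 + o(1)}.$$
A union bound over the $n$ vertices gives failure probability $n^{-0.29+o(1)} = o(1)$.

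The only delicate step is this constant check: one needs the sharp Chernoff exponent rather than the crude $(e\mu/k)^k$ bound, and must verify that $3\log 3 - 2 > 1$. Parts (a) and (b) are routine.
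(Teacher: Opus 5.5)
Your proof is correct and follows the paper's route: first and second moments with Chebyshev for (a), Markov's inequality for (b), and a Chernoff bound plus a union bound for (c). The differences are cosmetic. In (a) you bound $\E[Z(Z-1)]\le(\E Z)^2$ via $1-2/n\le(1-1/n)^2$, whereas the paper estimates $\E[Z(Z-1)]$ asymptotically. In (c) you use the relative-entropy form of Chernoff, with exponent $3\log 3-2\approx 1.296$, whereas the paper uses $\exp\bigl(-t^2/(2(\mu+t/3))\bigr)$, with exponent $1.2$; both exceed $1$, which is all the union bound needs.
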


\begin{proof}
We start with the proof of part (a). Suppose that $T = T(n) = n(\log n - \omega)$, where $\omega = \omega(n) \to \infty$ as $n \to \infty$. Clearly, for any $i \in [n]$, the probability that vertex $i$ is \emph{not} presented by the semi-random process during the first $T$ steps (that is, that $u_t \neq i$ for all $t \in [T]$) can be estimated as follows:
$$
\Prob ( X_i = 0 ) ~=~ \left( 1 - \frac 1n \right)^T ~=~ \exp \left( - \frac{T}{n} + O \left( \frac {T}{n^2} \right) \right) ~=~ (1+o(1)) \exp \left( - \frac{T}{n} \right),
$$
and so $\E [Z] = (1+o(1)) n \exp( - T/n) = (1+o(1)) e^{\omega} \to \infty$ as $n \to \infty$. Similarly, 
$$
\E [Z(Z-1)] ~=~ n (n-1) \left( 1 - \frac 2n \right)^T ~=~ (1+o(1)) n^2 \exp \left( - \frac{2T}{n} \right) ~=~ (1+o(1)) ( \E [Z] )^2,
$$
and so $\Var [Z] = o ( \E [Z] )^2$. It follows from Chebyshev's inequality that \whp\ 
$$
Z = (1+o(1)) \E [Z] = (1+o(1)) n \exp( - T/n). 
$$
This finishes the proof of part~(a).

\medskip

For part~(b), suppose that $T = T(n) = n(\log n + \omega)$, where $\omega = \omega(n) \to \infty$ as $n \to \infty$. Arguing as before and using $1-x\leq e^{-x}$, we obtain \[ \E[Z] = n\left(1-\frac1n\right)^T \leq n\exp\left(-\frac{T}{n}\right) = \exp\left(\log n-\frac{T}{n}\right) \to 0. \] Hence, by Markov's inequality, we get that \whp\ $Z = 0$. This finishes the proof of part~(b).

\medskip

Finally, for part~(c), suppose that $T = (1+o(1)) n \log n$ and fix any vertex $i \in [n]$. Clearly, $\E [X_i] = T/n = (1+o(1)) \log n$. It follows from Chernoff's bound that 
\begin{align*}
\Prob ( X_i \ge 3 \log n ) &= \Prob \Big( X_i \ge \E[X_i] + (2+o(1)) \log n \Big) \\
&\le \exp \left( - \left( \frac {2^2}{2(1+2/3)} + o(1) \right) \log n \right) = o (1/n). 
\end{align*}
Hence, by the union bound over $n$ vertices, \whp\ $X_i \le 3 \log n$ for all $i \in [n]$. This finishes the proof of part~(c). 
\end{proof}

\section{Offline Variant -- Proof of Theorems~\ref{thm:off_upper} and~\ref{thm:off_lower}}\label{sec:offline}

Let us start with the upper bound as it is simpler.

\begin{proof}[Proof of Theorem~\ref{thm:off_upper}]
Fix $T = n(\log n - \log \log n + 1) = (1+o(1)) n \log n$. Let $Z_T$ be the number of vertices not presented during the first $T$ steps. It follows from Lemma~\ref{lem:coupon_collector}(a) that \whp\ $Z_T = (1+o(1)) n \exp(-T/n) = (1+o(1)) e^{-1} \log n$.

Now, let $Y_T$ be the random variable counting how many times vertex $1$ is presented by the semi-random process during the first $T$ steps. Clearly, $Y_T \sim \Bin(T,1/n)$ with $\E[Y_T] = T/n = (1+o(1)) \log n$. It follows immediately from Chernoff's inequality that \whp\ $Y_T = (1+o(1)) \E[Y_T] = (1+o(1)) \log n$.

\medskip

Our strategy is simple. We build a star centred at vertex $1$, which is a triangle-saturated graph. Each time vertex $1$ is presented by the semi-random process, we connect it to one of the vertices that are not presented during the first $T$ steps. On the other hand, each time a vertex other than $1$ is presented, we connect it with~$1$. Since \whp\ vertex $1$ is presented more often than the number of vertices not presented at all (\whp\ $(1+o(1)) \log n = Y_T > Z_T = (1+o(1)) e^{-1} \log n$), the strategy works \whp\ This completes the proof of the theorem.
\end{proof}
\begin{remark}
Note that in the proof of the above theorem we could have selected a vertex presented the most often by the semi-random process instead of vertex $1$. This vertex could take care of slightly more vertices that were not presented at all which, in turn, would improve our upper bound. However, \whp\ each vertex $i \in [n]$ is presented $O(\log n)$ times (see Lemma~\ref{lem:coupon_collector}(c)) so this would only improve the bound by an additive term $O(n)$. Hence, we opted for a slightly easier proof.
\end{remark}
\medskip

We now move to the lower bound.

\begin{proof}[Proof of Theorem~\ref{thm:off_lower}]
Fix $T = n(\log n -2 \log \log n-\log 7) = (1+o(1)) n \log n$. Let $G_T$ be the graph constructed by an arbitrary offline strategy. If \(G_T\) contains a triangle, then it is not triangle-saturated, and
there is nothing to prove. We may therefore assume that \(G_T\) is
triangle-free.

As in the proof of Theorem~\ref{thm:off_upper}, let $Z_T$ be the number of vertices \emph{not} presented by the semi-random process during the first $T$ steps. It follows from Lemma~\ref{lem:coupon_collector}(a) that \whp\ $Z_T = (7+o(1)) \log^2 n$. Let $X_i$ be the random variable counting how many times vertex $i \in [n]$ is presented by the semi-random process during the first $T$ steps. It follows from Lemma~\ref{lem:coupon_collector}(c) that \whp\ $\max_{i \in [n]} X_i \le 3 \log n$. Since we aim for a statement that holds \whp, we may assume that these two properties hold. 

Let $I \subseteq [n]$ be the set of vertices not presented by the semi-random process. Clearly, $I$ induces an independent set in $G_T$ and, by our assumption, $|I|= Z_T=(7+o(1)) \log^2 n$. Our goal is to show that there exists an open pair $ab$, where $a\in I^c$ and $b\in I$. This will complete the proof of the theorem, since such a pair witnesses that $G_T$ is not triangle-saturated. 

Suppose that a cross-pair $ab$, with $a\in I^c$ and $b\in I$, is closed. Then, there exists a vertex $w\in [n]$ such that $wa,wb\in E(G_T)$. Since $I$ is an independent set, $w\in I^c$. Therefore, the number of closed cross-pairs $ab$ is at most
\begin{align}
\label{countclosed}
\sum_{w\in I^c}|N_{G_T}(w)\cap I^c| \cdot|N_{G_T}(w)\cap I|.
\end{align}
Note that a closed pair may be counted multiple times (if multiple common neighbours exist) in \eqref{countclosed}, but this only strengthens the upper bound. Every edge between \(w\) and \(I\) must be created in a round in which \(w\) is presented. Hence, for every \(w\in I^c\),
\[
|N_{G_T}(w)\cap I|
\leq X_w
\le \max_{i \in [n]} X_i 
\leq 3\log n,
\] 
where the last inequality follows from our assumption. 
Consequently, the number of closed cross-pairs $ab$ is at most
\begin{align*}
\sum_{w\in I^c}|N_{G_T}(w)\cap I^c| \cdot|N_{G_T}(w)\cap I|&\leq 3\log n \cdot\sum_{w\in I^c}|N_{G_T}(w)\cap I^c| \leq  3\log n\cdot\sum_{w\in I^c}\deg_{G_T}(w) \\ 
&\leq 3\log n \cdot 2T \leq (6+o(1)) n \log^2 n.
\end{align*}
Moreover, trivially, the number of pairs $ab$ such that $ab\in E(G_T)$ is at most $T$. Hence the number of cross-pairs that are either edges or closed is at most 
$$T+(6+o(1))n \log^2 n= (6+o(1)) n \log^2 n< (7+o(1))n\log ^2n =|I||I^c|.$$ 
It follows that some pair between $I$ and $I^c$ is open (neither an edge nor closed). Hence, $G_T$ is not triangle-saturated. This completes the proof of the theorem. 
\end{proof}

\section{Online Variant -- Proof of Theorems~\ref{thm:on_upper} and~\ref{thm:on_lower}}\label{sec:online}

As in the previous section, we start with the upper bound.

\begin{proof}[Proof of Theorem~\ref{thm:on_upper}]
Let $\omega=\omega(n)$ be any function tending to infinity as $n \to \infty$. Fix $T = n(\log n + \omega)$. It follows from Lemma~\ref{lem:coupon_collector}(b) that \whp\ each vertex is presented at least once. 

\medskip

We can build a star very easily, creating a triangle-saturated graph. We can pick an arbitrary vertex (say, vertex $n$) before the process starts. Then, in each step of the process we connect the presented vertex $u_t$ to this chosen vertex (that is, fix $v_t = n$ for all $t \in [T]$). The desired star is created in at most $T$ steps \whp, which finishes the proof of the theorem. 
\end{proof}

\begin{remark}
Note that in the proof of the above theorem, one could consider the following, slightly better strategy. One may still connect vertices presented by the semi-random process to vertex $n$, unless $n$ is presented. If $n$ is presented, then one may connect it to an arbitrary vertex that is not yet connected to $n$. This would speed up the creation of a star but not by much. The original strategy is expected to finish in 
$
\sum_{i=1}^{n-1} \frac {n}{i} = n H_n +O(1) = n \log n + \gamma n + O(1)
$ 
steps, where $H_n$ is the harmonic number and $\gamma \approx  0.57721$ is the Euler-Mascheroni constant. The better strategy is expected to finish in 
$
\sum_{i=1}^{n-1} \frac {n}{i+1} = n H_n - n = n \log n - (1-\gamma) n + O(1)
$ 
steps, improving the expectation only by a linear term. 
\end{remark}

\medskip

We now move to the lower bound which is much more involved. 

\begin{proof}[Proof of Theorem~\ref{thm:on_lower}]
 We will analyze the process in two phases. Let us fix
$$
T_1 = n(\log n -3 \log \log n) 
\quad \text{and} \quad 
T=n(\log n - \log \log n-3 \log \log \log n).
$$ 
Fix an arbitrary online strategy, and let \(G_t\) denote the graph
constructed after \(t\) rounds. If \(G_T\) contains a
triangle, then it is not triangle-saturated, and there is nothing to
prove. We may therefore assume that \(G_T\) is triangle-free.

Let $Z_{T_1}$ be the number of vertices \emph{not} presented by the semi-random process during the first $T_1$ steps. It follows from Lemma~\ref{lem:coupon_collector}(a) that \whp\ $Z_{T_1} = (1+o(1)) \log^3 n$.
For \(s\in[T]\) and \(v\in[n]\), let
\[
M_s(v):=\bigl|\{t\in[s]:u_t=v\}\bigr|
\]
be the random variable counting how many times vertex \(v\) is presented by the semi-random process during the first \(s\) rounds. It follows from Lemma~\ref{lem:coupon_collector}(c) that \whp\ $\max_{v \in [n]} M_T(v) \le 3 \log n$.

Let $I:=\{v\in [n]: M_{T_1}(v)=0\}$ be the set of vertices not presented by the semi-random process during the first $T_1$ steps. Since \whp\ $|I| = Z_{T_1} = (1+o(1)) \log^3 n$, \whp\ there are $(1+o(1)) n \log^3 n$ cross pairs $ab$ with $a\in I$ and $b\in I^c$. Arguing exactly as in the proof of Theorem~\ref{thm:off_lower}, regardless of how graph $G_{T_1}$ is constructed, we conclude that \whp\ almost all of these pairs are still open. 
\begin{claim}
\label{crosspairIIc}
The following holds \whp: the number of cross pairs $ab$, $a\in I, b\in I^c$ that are edges or closed at time $T_1$ is $O(n \log^2 n)$. 
\end{claim}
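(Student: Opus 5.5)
We follow the counting argument from the proof of Theorem~\ref{thm:off_lower}, now applied at time $T_1$ rather than $T$. We work on the event that $\max_{v\in[n]} M_T(v)\le 3\log n$, which holds \whp\ by Lemma~\ref{lem:coupon_collector}(c). Since $M_{T_1}(v)\le M_T(v)$, on this event also $\max_v M_{T_1}(v)\le 3\log n$. Any bound obtained there therefore holds \whp, regardless of the (online) strategy. If $G_{T_1}$ contains a triangle, then so does $G_T$; this case is already excluded, so we may take $G_{T_1}$ to be triangle-free.

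First, we bound the cross edges. No vertex of $I$ is presented during the first $T_1$ rounds. Hence every edge of $G_{T_1}$ between $I$ and $I^c$ was created in a round where its endpoint $w\in I^c$ was presented, and $I$ is independent in $G_{T_1}$. So the number of cross pairs that are edges is at most $T_1 = O(n\log n)$. More precisely, for every $w\in I^c$ we have $|N_{G_{T_1}}(w)\cap I|\le M_{T_1}(w)\le 3\log n$.

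Second, we bound the closed cross pairs. If $ab$ with $a\in I$, $b\in I^c$ is closed, there is a common neighbour $w$. Since $I$ is independent, $w\in I^c$. Thus the number of closed cross pairs is at most
\[
\sum_{w\in I^c}|N_{G_{T_1}}(w)\cap I^c|\cdot|N_{G_{T_1}}(w)\cap I|\le 3\log n\sum_{w\in I^c}\deg_{G_{T_1}}(w)\le 3\log n\cdot 2T_1=O(n\log^2 n).
\]
Adding the two contributions gives $O(n\log^2 n)$, as claimed.

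There is no real obstacle here. The only point needing care is that the degree bound must hold for an adaptive strategy. It does, because it depends only on how many times $w$ is presented, and that count is independent of the algorithm's choices, so Lemma~\ref{lem:coupon_collector}(c) applies directly.
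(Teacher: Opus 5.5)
Your proposal is correct and is essentially the paper's own proof. The paper simply reruns the counting argument from the proof of Theorem~\ref{thm:off_lower} with $T$ replaced by $T_1$: it finds a common neighbour $w\in I^c$ for each closed cross pair, uses $|N_{G_{T_1}}(w)\cap I|\le M_{T_1}(w)\le 3\log n$ and a degree sum of at most $2T_1$, and adds at most $T_1$ cross edges, which is exactly what you did.
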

\begin{proof}
The proof is identical to the corresponding argument in the proof of
Theorem~\ref{thm:off_lower}, with \(T\) replaced by \(T_1\).
\end{proof}

The above claim shows that on average a vertex from $I$ is part of $O(n/\log n)$ cross pairs that are edges or closed. Clearly, there could be some vertices in $I$ that are part of, say, more than $n / \log \log n$ such cross pairs. However, a simple averaging argument shows that there cannot be too many such vertices. Formally, for each \(a\in I\), let
\[
c(a):=
\bigl|\{b\in I^c:ab\text{ is an edge or is closed in }G_{T_1}\}\bigr|,
\]
and define
\[
I':=
\left\{
a\in I:
c(a)\leq \frac{n}{\log\log n}
\right\}.
\]

\begin{claim}
\label{claim:Iprime}
The following holds \whp: 
$|I'|=(1+o(1))|I|$.
\end{claim}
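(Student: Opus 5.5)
This is a Markov-type averaging argument built on Claim~\ref{crosspairIIc}. We work on the event, which holds \whp, that both $|I| = (1+o(1))\log^3 n$ (from Lemma~\ref{lem:coupon_collector}(a)) and the conclusion of Claim~\ref{crosspairIIc} hold. In particular, there is a constant $C>0$ with
\[
\sum_{a\in I} c(a) \le C n \log^2 n .
\]
This inequality holds because each cross pair $ab$ with $a\in I$, $b\in I^c$ that is an edge or closed at time $T_1$ is counted exactly once in the sum, namely in the term $c(a)$.

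The key step is to count the vertices of $I\setminus I'$. Each $a\in I\setminus I'$ satisfies $c(a) > n/\log\log n$. Since every $c(a)$ is nonnegative,
\[
|I\setminus I'| \cdot \frac{n}{\log\log n} \le \sum_{a\in I\setminus I'} c(a) \le \sum_{a\in I} c(a) \le C n\log^2 n .
\]
Rearranging gives
\[
|I\setminus I'| \le C \log^2 n \,\log\log n .
\]

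To finish, we compare this with the size of $I$:
\[
\frac{|I\setminus I'|}{|I|} \le \frac{C\log^2 n\,\log\log n}{(1+o(1))\log^3 n} = O\!\left(\frac{\log\log n}{\log n}\right) = o(1).
\]
Hence $|I'| = |I| - |I\setminus I'| = (1+o(1))|I|$ on the stated event, which proves the claim.

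There is no real obstacle here. The only points requiring care are that the constant in the $O(n\log^2 n)$ bound of Claim~\ref{crosspairIIc} is uniform, and that the lower bound on $|I|$ from Lemma~\ref{lem:coupon_collector}(a) holds on the same \whp\ event. The combined event is still \whp\ by a union bound.
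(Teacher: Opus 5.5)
Your proposal is correct and follows essentially the same averaging argument as the paper. You bound $\sum_{a\in I} c(a)=O(n\log^2 n)$ via Claim~\ref{crosspairIIc}, deduce $|I\setminus I'|=O(\log^2 n\,\log\log n)=o(\log^3 n)$, and compare this with $|I|=(1+o(1))\log^3 n$; your non-strict inequality $|I\setminus I'|\cdot n/\log\log n\le\sum_{a\in I\setminus I'}c(a)$ is slightly cleaner than the paper's strict version, which tacitly assumes $I\setminus I'\neq\emptyset$.
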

\begin{proof}
By Claim~\ref{crosspairIIc}, \whp\
$
\sum_{a\in I}c(a)=O(n\log^2 n).
$
On the other hand, by the definition of $I'$,
\[
\sum_{a\in I\setminus I'}c(a)
>
|I\setminus I'|\frac{n}{\log\log n}.
\]
Consequently, \whp\ $|I\setminus I'|
=
O\bigl( (\log n)^2 (\log\log n) \bigr)
=
o(\log^3 n)$. Since $|I|=(1+o(1))\log^3 n$ \whp, we conclude that \whp\ $|I'|=(1+o(1))|I|$.
\end{proof}

Let $J\subseteq [n]$ be the set of vertices not presented by the semi-random process by time $T$, that is, $J:=\{v\in [n]: M_T(v)=0\}$. It follows from  Lemma~\ref{lem:coupon_collector}(a) that \whp\ 
$$
|J| = Z_T = (1+o(1)) (\log n) (\log \log n)^3.
$$ 
Note that $J\subseteq I$ because $T\geq T_1$. More importantly, $J$ is a random subset of $I$ of cardinality $Z_T$. Consider $J'=J\cap I'$. By Claim~\ref{claim:Iprime}, almost all vertices in $I$ are in $I'$.

Conditioned on the history up to time \(T_1\) and on \(|J|\), the set
\(J\) is uniformly distributed among all \(|J|\)-subsets of \(I\).
Consequently,
\[
\mathbb{E}\bigl[|J\setminus I'|
  \,\big|\, I,I',|J|\bigr]
=
|J|\frac{|I\setminus I'|}{|I|}
=
o(|J|),
\]
where the last equality follows from Claim~\ref{claim:Iprime}.
Thus, by Markov's inequality, $|J\setminus I'|=o(|J|)$
\whp\ Therefore,
\[
|J'|=(1+o(1))|J|
=(1+o(1))(\log n)(\log\log n)^3.
\]

Next, we bound the number of 
cross pairs between $J'$ and $J'^c$ that are edges or closed.
\begin{claim}
\label{claim:initial-bad-pairs}
The following holds \whp: at time $T_1$, the number of pairs between $J'$ and $J'^c$ that are edges or closed is $o(n|J'|)$.
\end{claim}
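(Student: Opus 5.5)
We need to bound, at time $T_1$, the number of pairs $ab$ with $a\in J'$, $b\in J'^c$ that are edges or closed in $G_{T_1}$. Split these pairs by where $b$ lies: either $b\in I^c$, or $b\in I\setminus J'$. We will bound each part separately and show both are $o(n|J'|)$ \whp.

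\textbf{Pairs with $b\in I^c$.} For each $a\in J'\subseteq I'$, the number of such pairs containing $a$ is exactly $c(a)$. By the definition of $I'$, we have $c(a)\le n/\log\log n$. Summing over $a\in J'$, this part is at most
\[
|J'|\cdot\frac{n}{\log\log n}=o(n|J'|).
\]

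\textbf{Pairs with $b\in I\setminus J'$.} Both endpoints lie in $I$, and $I$ is an independent set in $G_{T_1}$. Indeed, every edge of $G_{T_1}$ has an endpoint presented by time $T_1$, so no such pair is an edge. The trivial bound already suffices here: the number of these pairs is at most
\[
|J'|\cdot|I|=|J'|\cdot(1+o(1))\log^3 n=o(n|J'|).
\]
We do not even need to decide whether these pairs are closed.

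Adding the two contributions gives $o(n|J'|)$. This holds \whp\ because it only uses events already established \whp: $|I|=(1+o(1))\log^3 n$ from Lemma~\ref{lem:coupon_collector}(a), the definition of $I'$ (with Claim~\ref{claim:Iprime}), and $J'\subseteq I'$.

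The main point to be careful about is that $J'^c$ includes the vertices of $I\setminus J'$ as well as $I^c$. This second part is handled by the crude count $|I|=\mathrm{polylog}(n)\ll n$, so the whole argument has no real obstacle.
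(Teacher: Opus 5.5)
Your proof is correct and follows essentially the same route as the paper: you split the pairs according to whether the endpoint outside $J'$ lies in $I^c$, bounded by $\sum_{a\in J'}c(a)\le n|J'|/\log\log n$, or in $I\setminus J'$, bounded crudely by $|J'||I|=O(|J'|\log^3 n)=o(n|J'|)$. Your extra remark that $I$ is independent in $G_{T_1}$ is true but, as you note, not needed.
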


\begin{proof}
Since $J'\subseteq I'$, there are at most
$$
\sum_{a\in J'}c(a)\leq \frac{n|J'|}{\log\log n}=o(n|J'|)
$$
such pairs between $J'$ and $I^c$. The remaining pairs lie between $J'$ and $I\setminus J'$, of which there are at most
$|J'||I|=O(|J'|\log^3 n)=o(n|J'|)$.
\end{proof}

We now upper bound the number of additional cross pairs between $J'$ and $J'^c$ that become closed during rounds $T_1+1, T_1+2, \ldots, T$. 
\begin{lemma}
\label{newclosed}
The following holds \whp: 
the number of cross pairs between $J'$ and $J'^c$
that become closed during rounds $T_1+1, T_1+2, \ldots, T$ is $O(n (\log n) (\log \log n)^2)$.
\end{lemma}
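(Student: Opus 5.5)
The plan is to classify each new closure by the round in which the path creating it appears, and then to exploit that an online strategy cannot know which currently unpresented vertices will stay unpresented until time $T$.

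\textbf{Where closures come from.} Fix $a\in J'$. Since $a$ is never presented during the first $T$ rounds, every edge at $a$ has the form $u_t a$ with $v_t=a$, and $u_t\neq a$. A pair $ab$ with $b\in J'^c$ that becomes closed at some round $t\in(T_1,T]$ acquires a common neighbour $w$ at round $t$. Since $J'$ is independent, $w\notin J'$. One of two things happens at round $t$:
\begin{enumerate}
\item[(i)] the edge $wa$ is added, so $u_t=w$ and $v_t=a$;
\item[(ii)] the edge $wb$ is added while $wa$ is already present.
\end{enumerate}
In case (i) at most $\deg_{G_T}(w)$ pairs close. In case (ii) at most $|N_{G_T}(w)\cap J'|$ pairs close, and $w\in\{u_t,v_t\}$. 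I would therefore bound the number of new closed pairs by
\[
\sum_{w\notin J'} \deg_{G_T}(w)\,k_w \;+\; \sum_{t\in(T_1,T]}\bigl(|N_{G_T}(u_t)\cap J'|+|N_{G_T}(v_t)\cap J'|\bigr),
\]
where $k_w$ is the number of rounds $t\le T$ with $u_t=w$ and $v_t\in J$. Each edge into $J'$ is created in a round in which its non-$J'$ endpoint $w$ is presented, and $w$ has degree $\deg_{G_T}(w)$. Hence the second sum is at most $\sum_{w}\deg_{G_T}(w)\,|N_{G_T}(w)\cap J'|\le \sum_w \deg_{G_T}(w)\,k_w$. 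So both cases reduce to bounding
\[
S:=\sum_{w}\deg_{G_T}(w)\,k_w .
\]

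\textbf{Key probabilistic input: edges into $J$ are rare.} At round $t$, the strategy knows only the history, and the event $v_t\in J$ requires $v_t$ to be unpresented in rounds $t+1,\dots,T$. Conditionally on the history, this has probability at most $p_t:=(1-1/n)^{T-t}\le e^{-(T-t)/n}$, whatever the strategy does. Two consequences follow.
\begin{itemize}
\item The expected total number of rounds with $v_t\in J$ is at most $\sum_{t\le T}p_t=O(n)$. By Markov's inequality, \whp\ $\sum_w k_w\le n\log\log n$.
\item Trivially $k_w\le M_T(w)\le 3\log n$ for every $w$, by Lemma~\ref{lem:coupon_collector}(c).
\end{itemize}
Also $\sum_w \deg_{G_T}(w)\le 2T$.

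\textbf{Bounding $S$ (main obstacle).} The difficulty is that $\deg_{G_T}(w)$ is not measurable at the time the $J$-edges of $w$ are created, and the adversary could try to correlate high degree with large $k_w$. I would split vertices by degree at the threshold $D:=n/\log n$.
\begin{itemize}
\item \emph{Low-degree vertices} ($\deg_{G_T}(w)< D$). Their contribution is at most $D\sum_w k_w\le (n/\log n)\cdot n\log\log n$ \whp. This is too weak as stated, so here I would instead bound the expectation of $\sum_t \deg_{G_T}(u_t)\mathbf 1[v_t\in J]$ directly. Since $\deg_{G_T}(u_t)\le \deg_{G_t}(u_t)+(\text{future degree})$, I would control the future degree by $M_T(u_t)$ together with the number of future rounds in which $u_t$ is chosen as $v$. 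I expect this to give $O(n\log n)$ in expectation up to $\log\log n$ factors, which Markov's inequality then upgrades to the $(\log\log n)^2$ slack.
\item \emph{High-degree vertices} ($\deg_{G_T}(w)\ge D$). There are at most $2T/D=O(\log^2 n)$ of them, and each has $k_w\le 3\log n$. For each such $w$, I would argue that $\E[k_w]=O(1)$: its presentation times are uniform in $[T]$, and summing $p_t$ over $M_T(w)=O(\log n)$ uniform times gives $O(\log n)\cdot O(1/\log n)\cdot O(1)$-type bounds. This should yield that these vertices contribute $O(n\log n\,\log\log n)$ after a union-type argument.
\end{itemize}
Combining the two ranges, together with Claim~\ref{claim:initial-bad-pairs}, would give the claimed $O(n(\log n)(\log\log n)^2)$ \whp. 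I expect the delicate point to be making the high-degree estimate rigorous despite adaptivity. The first thing I would try is a supermartingale in $t$ that accumulates $p_t$ per presentation, which the strategy cannot influence.
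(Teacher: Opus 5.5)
Your case split (i)/(ii) is exhaustive and the reduction to $S=\sum_w\deg_{G_T}(w)\,k_w$ is a valid upper bound. The gap is that the proof stops where the real work begins: the bound on $S$ is only sketched ("I expect", "should yield"), and neither sketch works as written.

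\emph{Low-degree part.} The ``future degree'' coming from later rounds in which the strategy picks $u_t$ as $v_s$ sums to $\sum_s\#\{t<s:\ u_t=v_s,\ v_t\in J\}$. This quantity is entirely under the strategy's control, and the trivial bound $T\max_w k_w=O(n\log^2 n)$ is far too large.

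\emph{High-degree part.} $\E[k_w]=O(1)$ holds for a \emph{fixed} $w$, but the set of vertices of degree at least $D$ is chosen adaptively. Near time $T$ the strategy can see which vertices have sent edges to currently unpresented vertices, which are very likely to end up in $J$, and then pump their degrees. So a per-vertex expectation plus a ``union-type argument'' does not control $\sum_{w\text{ high}}\deg_{G_T}(w)k_w$.

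What is missing is an estimate made at the moment the degree is handed out. One option is to write $S\le\sum_s(k_{u_s}+k_{v_s})$ and use $\E[k_w\mid\mathcal F_s]\le 1+M_s(w)(1-1/n)^{T-s}$, where $\mathcal F_s$ is the history through round $s$ and $v_s$ is $\mathcal F_s$-measurable. This would give $\E\sum_s k_{v_s}=O(n\log n)$, but nothing of this kind appears in your proposal.

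More importantly, the difficulty is self-inflicted. You replaced the per-round quantities $\deg_{G_{t-1}}(u_t)$ and $|N_{G_{t-1}}(w)\cap J'|$ by $\deg_{G_T}(w)$ and re-summed over \emph{all} rounds $t\le T$ via $k_w$. That throws away the fact that only the $T-T_1\le 2n\log\log n$ rounds in $(T_1,T]$ matter.

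The paper keeps both features. In each round $t\in(T_1,T]$, at most
$$\mathbf 1_{\{v_t\in J'\}}d_{t-1}(u_t)+d_{t-1}(u_t,J')+d_{t-1}(v_t,J')$$
cross pairs become closed.
\begin{itemize}
\item The last two terms are each at most $M_T(\cdot)\le 3\log n$ \whp, by your own observation that edges into $J'$ are created only in rounds where the other endpoint is presented.
\item For the first term, drop the indicator. Since $u_t$ is uniform and independent of $G_{t-1}$, it has conditional expectation $2e(G_{t-1})/n\le 2\log n$.
\end{itemize}
Summing over $(T_1,T]$ gives at most $12n\log n\log\log n$ (on the \whp\ event $\max_v M_T(v)\le 3\log n$), plus a term of expectation at most $4n\log n\log\log n$. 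Markov's inequality then yields the lemma.

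In particular, your case (ii) sum is already at most $6\log n\,(T-T_1)$ before you convert it. Also, Claim~\ref{claim:initial-bad-pairs} plays no role in this lemma.
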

\begin{proof}
For a set
$S\subseteq[n]$, write
\[
d_t(x,S):=|N_{G_t}(x)\cap S|
\qquad\text{and}\qquad
d_t(x):=d_t(x,[n])=\deg_{G_t}(x).
\]

Suppose that the edge $u_tv_t$ is added at time $t$. Every pair that
becomes closed at time $t$ is either of the form $xv_t$, where
$x\in N_{G_{t-1}}(u_t)$, or of the form $xu_t$, where
$x\in N_{G_{t-1}}(v_t)$. Since $u_t\notin J'$, the newly closed cross
pairs of the second type are counted by $d_{t-1}(v_t,J')$. For the
first type, if $v_t\in J'$, there are at most $d_{t-1}(u_t)$ such
pairs, whereas if $v_t\notin J'$, there are at most
$d_{t-1}(u_t,J')$ such pairs. Thus, the number of cross pairs between
$J'$ and $J'^c$ that become closed at time $t$ is at most
\[
\mathbf 1_{\{v_t\in J'\}}d_{t-1}(u_t)
+d_{t-1}(u_t,J')
+d_{t-1}(v_t,J').
\]
Consequently, the total number $C_{\mathrm{new}}$ of such newly closed cross pairs satisfies the following:
\begin{align*}
C_{\mathrm{new}}
\leq
\sum_{t=T_1+1}^T
 \mathbf 1_{\{v_t\in J'\}}d_{t-1}(u_t)+
\sum_{t=T_1+1}^T d_{t-1}(u_t,J')
+
\sum_{t=T_1+1}^T d_{t-1}(v_t,J').
\end{align*}

Because no vertex of $J'$ is ever presented by the semi-random process, every edge between a vertex $x$ and $J'$ must be created in a round in which $x$ is
presented. Hence, \whp\ for every $s\leq T$ and for any $x \in [n]$, $d_s(x,J')\leq M_s(x)\leq M_T(x)\leq 3\log n$.
It follows that \whp
\begin{align}
\nonumber
C_{\mathrm{new}}
&\leq
\sum_{t=T_1+1}^T
 \mathbf 1_{\{v_t\in J'\}}d_{t-1}(u_t)+
(T-T_1)6 \log n \\
\label{eq:new-closed2}
&\leq \sum_{t=T_1+1}^T
 \mathbf 1_{\{v_t\in J'\}}d_{t-1}(u_t)+
12n (\log n) (\log \log n). 
\end{align}
It remains to estimate the sum in \eqref{eq:new-closed2}. Let
\[
A:=
\sum_{t=T_1+1}^T
 \mathbf 1_{\{v_t\in J'\}}d_{t-1}(u_t)\leq \sum_{t=T_1+1}^T
 d_{t-1}(u_t).
\]
Therefore,
\[
\mathbb E A
\leq
\sum_{t=T_1+1}^T
\mathbb E\bigl[d_{t-1}(u_t)\bigr].
\]
Since $u_t$ is a vertex selected uniformly at random from $[n]$ and independent of $G_{t-1}$,
\[
\mathbb E\bigl[d_{t-1}(u_t)\mid G_{t-1}\bigr]
=
\frac{2e(G_{t-1})}{n}
\leq \frac{2(t-1)}{n}
\leq 2\log n.
\]
Hence,
$$
\mathbb E A
\leq
(2 \log n) (T-T_1)
\leq 4n (\log n) (\log \log n).
$$
By Markov's inequality, \whp\ $A=O(n (\log n) (\log \log n)^2)$. Combining this with \eqref{eq:new-closed2}, we conclude that \whp\
$
C_{\mathrm{new}}=O(n (\log n) (\log \log n)^2). 
$
\end{proof}

With Lemma~\ref{newclosed} at hand, it is very easy to complete the proof of Theorem~\ref{thm:on_lower}. By Lemma~\ref{newclosed} and Claim~\ref{claim:initial-bad-pairs}, at time $T$, \whp\ the total number of cross pairs between $J'$ and $J'^c$ that are closed is $o(n|J'|)+O(n (\log n) (\log \log n)^2)=o(n|J'|)$. The number of edges between $J'$ and $J'^c$ is, trivially, at most $T=O(n \log n)=o(n|J'|)$. On the other hand, \whp\ the total number of cross pairs is equal to $|J'||J'^c| =(1+o(1))n|J'|$.
It follows that \whp\ some cross pair between $J'$ and $J'^c$ is neither an edge
nor closed. Hence $G_T$ is not triangle-saturated \whp, completing the proof. 
\end{proof}

\section{Acknowledgements}
The first author's research is supported by a PIMS Postdoctoral Fellowship (PIMS-20260806-PDF).

\bibliographystyle{plain}

\bibliography{refs.bib}

\end{document}